\documentclass[12pt]{article}
\usepackage[a4paper,margin=1.0in]{geometry}
\usepackage[colorlinks,citecolor=magenta,linkcolor=black]{hyperref}
\pdfpagewidth=\paperwidth \pdfpageheight=\paperheight
\usepackage{amsfonts,amssymb,amsthm,amsmath,eucal,tabu,url,nccmath,empheq}
\usepackage{systeme}
\usepackage{fancyvrb}
\usepackage{pgf}
 \usepackage{array}
 \usepackage{tikz-cd}
 \usepackage{pstricks}
 \usepackage{pstricks-add}
 \usepackage{pgf,tikz}
 \usetikzlibrary{automata}
 \usetikzlibrary{arrows}
 \usepackage{indentfirst}
 \pagestyle{myheadings}
\usepackage{tabularx} 
\usepackage{booktabs}
\usepackage{xcolor}
\usepackage{longtable}


\theoremstyle{plain}
\newtheorem{thm}{Theorem}[section]
\newtheorem{theorem}[thm]{Theorem}

\newtheorem{proposition}[thm]{Proposition}

\theoremstyle{definition}
\newtheorem{definition}[thm]{Definition}
\newtheorem{remark}[thm]{Remark}

\newtheorem{thevarthm}[thm]{\varthmname}

\newenvironment{varthm*}[1]{\trivlist\item[]{\bf #1.}\it}{\endtrivlist}


\renewcommand\geq{\geqslant}

\renewcommand\leq{\leqslant}

\newcommand\be{\begin{eqnarray*}}
\newcommand\ee{\end{eqnarray*}}

\newcommand\newop[2]{\def#1{\mathop{\rm #2}\nolimits}}
\newop\log{log}
\newop\ord{ord}
\newop\Gal{Gal}
\newop\SL{SL}
\newop\Bl{Bl}
\newop\mult{mult}
\newop\mass{mass}
\newop\div{div}
\newop\codim{codim}
\newop\sing{sing}
\newop\vdim{vdim}
\newop\edim{edim}
\newop\Ass{Ass}
\newop\size{size}
\newop\reg{reg}
\newop\satdeg{satdeg}
\newop\supp{supp}
\newop\Neg{Neg}
\newop\Nef{Nef}
\newop\Nefh{Nef_H}
\newop\Eff{Eff}
\newop\Zar{Zar}
\newop\MB{MB}
\newop\MBxC{MB\mathit{(x,C)}}
\newop\NnB{NnB}
\newop\Bigg{Big}
\newop\Effbar{\overline{\Eff}}

\def\keywordname{{\bfseries Keywords}}%
\def\keywords#1{\par\addvspace\medskipamount{\rightskip=0pt plus1cm
\def\and{\ifhmode\unskip\nobreak\fi\ $\cdot$
}\noindent\keywordname\enspace\ignorespaces#1\par}}
\def\subclassname{{\bfseries Mathematics Subject Classification
(2020)}\enspace}
\def\subclass#1{\par\addvspace\medskipamount{\rightskip=0pt plus1cm
\def\and{\ifhmode\unskip\nobreak\fi\ $\cdot$
}\noindent\subclassname\ignorespaces#1\par}}

\begin{document}
\title{On symmetric plane quartic curves}
\author{Marek Janasz}
\date{\today}
\maketitle

\thispagestyle{empty}
\begin{abstract}
In the present paper we study the geometry of plane quartics with large automorphism groups. We show results devoted to smooth plane quartics that are invariant under the action of the elementary abelian group of type $[2,2,2]$, and we study geometric properties of the smooth plane quartic having automorphism group of order $48$.
\keywords{14N25, 14H50, 32S25, 14C20}
\subclass{plane curves, line arrangements, singularities}
\end{abstract}
\section{Introduction}
In the present paper we study geometric properties of irreducible symmetric quartic curves $C \subset \mathbb{P}^{2}_{\mathbb{C}}$. Following the lines of \cite{b,m}, we say that an irreducible and reduced curve $C \subset \mathbb{P}^{2}_{\mathbb{C}}$ of genus $g\geq 2$ has \textbf{large automorphism group} if
$$ \# {\rm Aut}(C) > 4(g-1).$$
In the case of plane quartics, the above condition means that $\# {\rm Aut}(C) > 8$. It is worth recalling that smooth plane quartic curves in $\mathbb{P}^{2}_{\mathbb{C}}$ with $\# {\rm Aut}(C) >8$ were classified in \cite{b} with respect to the order of automorphism groups, namely
\begin{itemize}
\item The Klein quartic $C_{168} : x^{3}y + y^{3}z + z^{3}x = 0$ with $\#{\rm Aut}(C_{168}) = 168$,
\item the Dyck quartic $C_{96} : x^{4} + y^{4}+z^{4} = 0$ with $\#{\rm Aut}(C_{96}) = 96$,
\item quartic $C_{48} : x^{4} + y^{4} + xz^{3} = 0$ with $\#{\rm Aut}(C_{48}) = 48$,
\item quartics in one-parameter family $C_{24,a} : x^{4}+y^{4}+z^{4}+3a(x^{2}y^{2} + y^{2}z^{2} + y^{2}z^{2}) = 0$ with $\#{\rm Aut}(C_{24,a}) = 24$, where $a\not\in \{0, (-1\pm \sqrt{-7})/2\}$
\item quartics in one parameter family $C_{16, \delta} : x^{4} + y^{4} + z^{4} + \delta z^{2}y^{2} = 0$ with $\#{\rm Aut}(C_{16,\delta}) = 16$, where $\delta \not\in \{0,\pm 2, \pm 6, \pm 2\sqrt{-3}\}$,
\item quartic $C_{9} : z^{4} + zy^{3} + yx^{3} = 0$ with  $\#{\rm Aut}(C_{9}) = 9$.
\end{itemize}
In the joint work with Pokora and 
Zieli\'nski \cite{JPZ} we studied 
combinatorial properties of line
arrangements determined by bitangents to 
smooth plane quartics having large 
automorphism groups and, in particular, 
we described the weak combinatorics of 
such arrangements. It turns out that the 
arrangements consisting of smooth plane 
quartics and their bitangents allow to 
construct non-trivial free and plus-one 
generated arrangements. In the light of 
these results, we focus on the geometry of 
the smooth plane quartic $C_{48}$ and we 
find effectively all the $28$ bitangent 
lines, the weak combinatorics of the 
arrangement of bitangents, and then we 
construct new examples of free plane 
curve arrangements. In this way we fill 
the gap in our knowledge of arrangements 
consisting of bitangents to symmetric 
plane quartics. The second result of the 
paper is devoted to plane quartics such 
that they are invariant under the action 
of $G_{8} = \mathbb{Z}_{2}^{3}$ and such 
that their Hessian is very symmetric. In 
that setting, our result tells us that if 
$C$ is an irreducible plane quartic which 
is $G_{8}$-invariant having Hessian of 
the form $x^{2}y^{2}z^{2}$, then $C$ is 
given by $Ax^{4}+By^{4}+Cz^{4}$ with 
$ABC=1/1728$.

The structure of the paper goes as follows. In Section $2$, we present our classification result for $G_{8}$-invariant irreducible plane quartics with a prescribed Hessian. In Section $3$, we study the geometry of the plane quartic $C_{48}$ and its $28$ bitangents.
In particular, we provide a classification result on free arrangements consisting of $C_{48}$ and its bitangents.

We work only over the complex numbers, and our computations are supported by \verb}SINGULAR} \cite{Singular}.
\section{$G_{8}$-invariant plane quartic curves}
Here we want to study plane quartics that are invariant under the action of the elementary abelian group of type $[2,2,2]$ of order $8$, and we denote this group by $G_{8}$. In our studies we will use the following faithful matrix representation generated in ${\rm GL}(3,\mathbb{Z})$ by the three matrices:
\begin{align}
\left[\begin{array}{ccc}
 -1  & 0 & 0  \\
  0 & 1 & 0 \\
  0 & 0 & 1
\end{array}\right],
\left[\begin{array}{ccc}
 1  & 0 & 0 \\
 0  & -1 & 0 \\
 0  & 0 & 1
\end{array}\right],
\left[\begin{array}{ccc}
 1  & 0 & 0 \\
 0  & 1 & 0 \\
 0  & 0 & -1
\end{array}\right].
\end{align}
Recall that if $\rho : G \rightarrow {\rm GL}(n,\mathbb{C})$ is a faithful representation of a finite group $G$, then we have the following standard action on the graded ring of polynomials $S:=\mathbb{C}[x,y,z]$, namely
$$G\times S \ni (g,f) \mapsto f(\rho(g)^{-1}\cdot (x,y,z)^{t}) \in S.$$
Since our representation of $G_{8}$ is generated by the diagonal matrices and for every $g \in G_{8}$ one has $\rho(g)^{-1} = \rho(g)$, the action on $S$ can be described as
$$G_{8} \times S \ni (g,f) = f(ax,by,cz) \in S,$$
where $\rho(g) = {\rm Diag}(a,b,c)$ with $a,b,c \in \{-1,1\}$.
Our first observation is the following.
\begin{proposition}
One has $\mathbb{C}[x,y,z]^{G_{8}} = \mathbb{C}[x^{2}, y^{2}, z^{2}]$.
\end{proposition}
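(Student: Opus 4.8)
The plan is to show the two inclusions $\mathbb{C}[x^2,y^2,z^2] \subseteq \mathbb{C}[x,y,z]^{G_8}$ and $\mathbb{C}[x,y,z]^{G_8} \subseteq \mathbb{C}[x^2,y^2,z^2]$. The first inclusion is immediate: each of the three generating diagonal matrices negates exactly one variable, so it fixes $x^2$, $y^2$, and $z^2$; since $G_8$ is generated by these matrices, every element of $G_8$ fixes each of $x^2,y^2,z^2$, and hence fixes every polynomial in $\mathbb{C}[x^2,y^2,z^2]$. The content of the proposition is therefore the reverse inclusion.

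For the reverse inclusion I would argue monomial by monomial. Write any $f \in \mathbb{C}[x,y,z]$ as a $\mathbb{C}$-linear combination of monomials $x^a y^b z^c$, and record how the three generators act: the first generator sends $x^a y^b z^c$ to $(-1)^a x^a y^b z^c$, the second to $(-1)^b x^a y^b z^c$, and the third to $(-1)^c x^a y^b z^c$. Thus each monomial is a common eigenvector for the whole group $G_8$, with the generators acting by the signs $(-1)^a$, $(-1)^b$, $(-1)^c$. Because distinct monomials are linearly independent and are scaled by (possibly different) characters of $G_8$, a linear combination $f$ is $G_8$-invariant if and only if \emph{every} monomial appearing in $f$ with nonzero coefficient is itself invariant. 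This reduces the problem to identifying the invariant monomials.

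A monomial $x^a y^b z^c$ is fixed by all three generators precisely when $(-1)^a = (-1)^b = (-1)^c = 1$, that is, when $a$, $b$, and $c$ are all even. Writing $a = 2a'$, $b = 2b'$, $c = 2c'$, such a monomial equals $(x^2)^{a'}(y^2)^{b'}(z^2)^{c'}$, which lies in $\mathbb{C}[x^2,y^2,z^2]$. Hence every invariant monomial, and therefore every invariant polynomial, belongs to $\mathbb{C}[x^2,y^2,z^2]$, giving the second inclusion and completing the proof.

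The only point requiring genuine care — rather than a difficult obstacle — is the decomposition step: one must justify that invariance of $f$ forces invariance of each of its monomials, which follows cleanly from the linear independence of monomials together with the fact that they are simultaneous eigenvectors under the diagonal action. Everything else is a direct computation with signs.
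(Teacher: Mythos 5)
Your proof is correct, but it is worth noting that the paper does not actually argue this proposition at all: its ``proof'' is a one-line citation to \cite[p.~57]{MarNeu}, where the statement is a standard fact about invariants of finite groups of diagonal matrices. What you supply is precisely the self-contained argument that this reference encapsulates. Since $G_{8}$ acts diagonally, the monomials $x^{a}y^{b}z^{c}$ form a basis of simultaneous eigenvectors, each generator scaling a monomial by $(-1)^{a}$, $(-1)^{b}$, or $(-1)^{c}$ respectively; invariance of $f$ then descends to each monomial appearing in $f$ by linear independence, and an invariant monomial must have $a,b,c$ all even, hence lies in $\mathbb{C}[x^{2},y^{2},z^{2}]$. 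The one step you flagged as needing care --- that invariance of $f$ forces invariance of each of its monomials --- is indeed the crux, and your eigenvector/linear-independence argument handles it correctly (equivalently: the character-isotypic decomposition of $S$ under the abelian group $G_{8}$ is spanned by monomials, so the invariant ring is spanned by the invariant monomials). Your route buys self-containedness and makes visible the general fact $\mathbb{C}[x_{1},\dots,x_{n}]^{(\mathbb{Z}/2)^{n}}=\mathbb{C}[x_{1}^{2},\dots,x_{n}^{2}]$, which the paper's citation leaves implicit; the paper's approach buys only brevity.
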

\begin{proof}
See for example \cite[p. 57]{MarNeu}.
\end{proof}
From now one we study only these reduced plane quartics $C$ that are $G_{8}$-invariant. Such a quartic has the following defining polynomial
\begin{equation}
\label{genq}
    Q(x,y,z) = Ax^{4} + By^{4} + Cz^{4} + Dx^{2}y^{2} + Ex^{2}z^{2} + F y^{2}z^{2}
\end{equation}
with $A,B,C,D,E,F \in \mathbb{C}$.
In our further investigations, we are going to work with the Hessians associated with plane quartics. In that context, we have the following crucial folkloric result, see \cite[p. 115]{Neusel}.
\begin{proposition}
Let $\rho : G \rightarrow {\rm GL}(n,\mathbb{C})$ be a faithful representation of a finite group $G$. Let $f \in \mathbb{C}[x_{1}, ..., x_{n}]^{G}$ and assume that for every $g \in G$ one has $({\rm det}(\rho(g)))^{2}=1$, then the Hessian of $f$ defined as
$${\rm Hess}(f) = {\rm det}\bigg[\frac{\partial^{2}f}{\partial_{x_{i}}\partial_{x_{j}}}\bigg]_{i,j}$$
is $G$-invariant.
\end{proposition}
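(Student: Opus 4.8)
The plan is to track how the Hessian transforms under the linear substitution of variables that defines the $G$-action, and then to observe that the hypothesis $(\det\rho(g))^{2}=1$ exactly cancels the Jacobian factor produced by that substitution. Throughout I write the action as $(g\cdot p)(x)=p(\rho(g)^{-1}x)$ for a polynomial $p$ and $x=(x_{1},\dots,x_{n})^{t}$, I denote by $H_{p}(x)=\big[\partial^{2}p/\partial x_{i}\partial x_{j}\big]_{i,j}$ the Hessian \emph{matrix}, so that $\mathrm{Hess}(p)=\det H_{p}$, and I abbreviate $A:=\rho(g)^{-1}$.

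First I would establish the transformation rule for the Hessian matrix under an arbitrary invertible substitution. Setting $h(x):=p(Ax)$, the chain rule gives $\partial h/\partial x_{i}=\sum_{k}(\partial p/\partial x_{k})(Ax)\,A_{ki}$, and differentiating a second time yields
$$\frac{\partial^{2}h}{\partial x_{i}\partial x_{j}}(x)=\sum_{k,l}\frac{\partial^{2}p}{\partial x_{k}\partial x_{l}}(Ax)\,A_{ki}A_{lj}.$$
In matrix form this is precisely $H_{h}(x)=A^{t}\,H_{p}(Ax)\,A$. Taking determinants, using multiplicativity together with $\det(A^{t})=\det(A)$, produces the scalar identity
$$\mathrm{Hess}(h)(x)=\det(A)^{2}\,\mathrm{Hess}(p)(Ax).$$

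Now I would specialize to $A=\rho(g)^{-1}$ and $p=f$. The left-hand side becomes $\mathrm{Hess}(g\cdot f)$, while the right-hand side equals $\det(\rho(g)^{-1})^{2}\,\big(g\cdot\mathrm{Hess}(f)\big)$, since by definition $(g\cdot\mathrm{Hess}(f))(x)=\mathrm{Hess}(f)(\rho(g)^{-1}x)$. Hence
$$\mathrm{Hess}(g\cdot f)=\det(\rho(g))^{-2}\,\big(g\cdot\mathrm{Hess}(f)\big).$$
At this point the hypothesis enters: from $(\det\rho(g))^{2}=1$ we also get $\det(\rho(g))^{-2}=1$, so the scalar factor is trivial and $\mathrm{Hess}(g\cdot f)=g\cdot\mathrm{Hess}(f)$. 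Finally, because $f\in\mathbb{C}[x_{1},\dots,x_{n}]^{G}$ satisfies $g\cdot f=f$, I conclude $g\cdot\mathrm{Hess}(f)=\mathrm{Hess}(g\cdot f)=\mathrm{Hess}(f)$ for every $g\in G$, which is exactly the assertion that $\mathrm{Hess}(f)$ is $G$-invariant.

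The only genuinely delicate point is the double application of the chain rule and the passage to the matrix identity $H_{h}(x)=A^{t}H_{p}(Ax)A$; everything afterwards is a one-line determinant computation. I expect the index bookkeeping in the second derivative — keeping the evaluation point $Ax$ straight and correctly pairing the two factors of $A$ — to be the only place requiring care, though it presents no conceptual obstacle. It is also worth noting that the proof uses nothing about $n$ or about $G$ beyond the determinant condition, which is what makes the result the stated folkloric fact.
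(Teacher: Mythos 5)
Your proof is correct, and the index bookkeeping in the double chain rule, the matrix identity $H_{g\cdot f}(x)=A^{t}H_{f}(Ax)A$ with $A=\rho(g)^{-1}$, and the cancellation of $\det(A)^{2}$ are all handled properly. The paper itself gives no argument for this proposition (it only cites Neusel, \emph{Invariant theory}, p.~115), and your computation is precisely the standard proof behind that folklore reference, so there is nothing to reconcile.
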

In the light of the above result, if we have a $G_{8}$-invariant quartic $C \, : f=0$, then its associated Hessian curve $C_{H} : {\rm Hess}(f) = 0$ is also $G_{8}$-invariant. 

Next, for a general $G_{8}$-invariant quartic given by we compute \eqref{genq} its Hessian, namely
\begin{multline}
{\rm Hess}(Q) = 48ADEx^6 + 48BDFy^{6} + 48CEFz^{6} + \\
(288 ABE + 48ADF - 24D^{2}E)x^{4}y^{2} + (288 ABF + 48 BDE - 24D^{2}F)x^{2}y^{4} \\
(288 BCE + 48CDF - 24EF^{2}) y^{2}z^{4} + (288 BCD + 48 BEF - 24DF^{2})y^{4}z^{2}\\
(288ACD + 48AEF - 24DE^{2})x^{4}z^{2} +  (288ACF + 48CDE - 24E^{2}F)x^{2}z^{4}  \\ 
(1728ABC - 144AF^{2} - 144BE^{2} - 144CD^{2} + 144DEF)x^{2}y^{2}z^{2}.
\end{multline}
Now we are ready to present our result devoted to $G_{8}$-invariant quartics having a very special Hessian.
\begin{theorem}
\label{symq}
Let $C : f=0$ be an irreducible $G_{8}$-invariant quartic in $\mathbb{P}^{2}_{\mathbb{C}}$ such that ${\rm Hess}(f) = x^{2}y^{2}z^{2}$. Then $C$ is given by the equation of the form
$$Q(x,y,z) = Ax^{4} + By^{4} + Cz^{4},$$
where $A, B, C$ such that $ABC=1/1728$.
\end{theorem}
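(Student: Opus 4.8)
The plan is to read off a system of polynomial equations in $A,\dots,F$ from the requirement that the displayed expression for $\mathrm{Hess}(Q)$ coincide with $x^2y^2z^2$, and then to show that irreducibility forces the three mixed coefficients $D,E,F$ to vanish. Comparing coefficients monomial by monomial, the vanishing of the $x^6,y^6,z^6$ terms gives
$$ADE=0,\qquad BDF=0,\qquad CEF=0,$$
the vanishing of the six coefficients of type $x^4y^2$ gives relations such as $12ABE+2ADF-D^2E=0$ and $12ABF+2BDE-D^2F=0$ (obtained by dividing the paper's coefficients by $24$), and the coefficient of $x^2y^2z^2$ yields the single inhomogeneous equation $1728ABC-144AF^2-144BE^2-144CD^2+144DEF=1$. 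Throughout I will exploit the $S_3$-symmetry of \eqref{genq} that simultaneously permutes $(x,y,z)$, $(A,B,C)$ and $(F,E,D)$ and carries these equations into one another, so that it suffices to treat one representative mixed coefficient.

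The heart of the argument is to prove $D=E=F=0$. Suppose not; by the symmetry above I may assume $D\neq 0$, whence $ADE=0$ and $BDF=0$ give $AE=0$ and $BF=0$, leaving four cases. If $A=B=0$, the $x^4y^2$- and $x^2y^4$-equations read $-D^2E=0$ and $-D^2F=0$, so $E=F=0$. If $A=F=0$, the $x^4y^2$-equation gives $E=0$ and then the $y^4z^2$-equation gives $BC=0$; since $C=0$ would force the $x^2y^2z^2$-equation to read $0=1$, we must have $B=0$. The case $E=B=0$ is the mirror image under the transposition $x\leftrightarrow y$, and the case $E=F=0$ is handled identically using the $y^4z^2$- and $x^4z^2$-equations. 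In every case one is driven to $Q=Cz^4+Dx^2y^2$ with $C,D\neq0$, which factors over $\mathbb{C}$ as $(\sqrt{C}\,z^2-\sqrt{-D}\,xy)(\sqrt{C}\,z^2+\sqrt{-D}\,xy)$, contradicting the irreducibility of $C$. Hence $D=E=F=0$.

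With $D=E=F=0$ the first nine equations hold automatically, and the $x^2y^2z^2$-equation reduces to $1728ABC=1$, i.e.\ $ABC=1/1728$; in particular $A,B,C$ are all nonzero and $Q=Ax^4+By^4+Cz^4$, as claimed.

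The step I expect to require the most care is the use of irreducibility. It is tempting to phrase it through the conic $q$ for which $Q=q(x^2,y^2,z^2)$, but $Q$ can be reducible even when $q$ is a smooth (rank-three) conic, because the squaring cover $(x:y:z)\mapsto(x^2:y^2:z^2)$ may split an irreducible conic; thus the nonvanishing of the discriminant of $q$ is \emph{not} sufficient, and one must instead exhibit the explicit factorization above. The only other thing to watch is bookkeeping: organizing the nine homogeneous equations so that the $S_3$-symmetry genuinely collapses the casework to the single reducible form $Cz^4+Dx^2y^2$, rather than silently omitting one of the four branches.
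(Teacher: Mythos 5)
Your proposal is correct, and it differs from the paper's proof at exactly one point: how the polynomial system is solved. The paper derives the same ten coefficient equations (your rescaled versions, divided by $24$, are literally the ones in its \texttt{SINGULAR} script) and then invokes the primary decomposition routine \texttt{primdecGTZ} as a black box to obtain the four solution components, namely $D=E=F=0$ with $1728ABC=1$ together with the three symmetric families such as $A=B=E=F=0$, $144CD^{2}=-1$; the only hand argument in the paper is the final exclusion of those three families via the factorization of $Cz^{4}+Dx^{2}y^{2}$, which is exactly the factorization you exhibit. You replace the computer-algebra step by an elementary case analysis: assuming a mixed coefficient is nonzero (WLOG $D\neq 0$, legitimately, since the $S_{3}$-action does permute $(A,B,C)$ and $(F,E,D)$ compatibly and fixes $x^{2}y^{2}z^{2}$), splitting via $AE=0$ and $BF=0$ into four branches, and driving each to the reducible form $Cz^{4}+Dx^{2}y^{2}$ --- I checked all four branches, including your use of the inhomogeneous equation to rule out $C=0$, and they close correctly, so your argument also establishes by hand the completeness of the solution list that the paper certifies by machine. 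What the paper's route buys is brevity and an automatic guarantee that no component is missed; what yours buys is a self-contained, human-verifiable proof with no software dependency, which moreover explains why the spurious components occur (they are the $S_{3}$-orbit of the reducible quartic $Cz^{4}+Dx^{2}y^{2}$). Your closing caveat is also substantive: writing $Q=q(x^{2},y^{2},z^{2})$ and testing the discriminant of the conic $q$ would not suffice, since $q=uv-w^{2}$ is a smooth conic while $x^{2}y^{2}-z^{4}=(xy-z^{2})(xy+z^{2})$ is reducible, so the explicit factorization is genuinely needed.
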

\begin{proof}
Let $C$ be a $G_{8}$-invariant quartic given by \eqref{genq}. Our problem boils down to solving the following system of equations
\begin{equation}
\label{conq}
{\rm Hess}(Q) = x^{2}y^{2}z^{2}.
\end{equation}
The equation \eqref{conq} leads us the following system of equations:
\begin{empheq}[left=\empheqlbrace]{equation}
    \begin{aligned}
     & 48ADE = 0 \\
     & 48BDF = 0 \\
     & 48CEF = 0 \\
     & 288 ABE + 48 ADF - 24D^{2}E = 0 \\
     & 288 ABF + 48 BDE - 24D^{2}F = 0 \\
     & 288 BCE + 48 CDF - 24EF^{2} = 0 \\
     & 288 BCD + 48 BEF - 24DF^{2} = 0 \\
     & 288 ACD + 48 AEF - 24DE^{2} = 0 \\
     & 288 ACF + 48 CDE - 24E^{2}F = 0 \\
     & 1728 ABC - 144 AF^{2} - 144 BE^{2} - 144 CD^{2} + 144 DEF = 1
    \end{aligned}.
\end{empheq}
Using the \verb}SINGULAR} script from the Appendix, we can verify that the above system of equations has the following solutions

\begin{equation*}
\systeme{D=0,E=0,F=0, 1728ABC=1}\, ,
\,\,
\systeme{A=0,B=0,E=0,F=0, 144CD^{2}=-1} \, ,
\,\,
\systeme{A=0,C=0,D=0,F=0, 144BE^{2}=-1} \, ,
\,\,
\systeme{B=0,C=0,D=0,E=0, 144AF^{2}=-1}.
\end{equation*} \\\\
In order to finish our proof, we need to observe that the last three solutions lead to reducible plane quartics. Indeed, if $A=B=E=F=0$ and $144CD^{2}=-1,$
then we get
$$Q(x,y,z) = \frac{-z^{4}}{144D^{2}} + Dx^{2}y^{2} = \frac{-1}{144D^{2}}\bigg(z^{2}+12\sqrt{D^{3}}xy\bigg)\bigg(z^{2} - 12\sqrt{D^{3}}xy\bigg).$$
In the same way, we can show that in the last two remaining cases our quartics are also reducible, and this completes the proof.
\end{proof}
\begin{remark}
Observe that the quartic $C : Ax^{4} + By^{4} + Cz^{4}=0$ is projectively equivalent to the Dyck quartic $C_{96}$ by using the obvious map
$$(x,y,z) \mapsto(ax,by,cz),$$
where $a^{4}=1/A$, $b^{4} = 1/B$, $c^{4}=1/C$.
\end{remark}

\section{On the geometry of the smooth plane quartic with automorphism group of order $48$}
In this section we want to focus on a very particular symmetric smooth plane quartic $C_{48} \subset \mathbb{P}^{2}_{\mathbb{C}}$ with the automorphism group of order $48$ that is given by
\begin{equation}
 F(x,y,z)= x^{4} + y^{4} + xz^{3}.   
\end{equation}
As it is explained in \cite{b}, our quartic curve $C_{48}$ is unique and it has the automorphism group ${\rm Aut}(C_{48}) = C_{4} \odot A_{4}$, i.e., this is the central extension by $C_{4}$ of $A_{4}$, and this group has the following presentation
\begin{multline*}
{\rm Aut}(C_{48}) = \langle a, \, b, \, c, \,  d \,\,| \,\, a^4=d^3=1,b^2=c^2=a^2,ab=ba,ac=ca,ad=da, \\ cbc^{-1}=a^{2}b,dbd^{-1}=a^{2}bc,dcd^{-1}=b\rangle.
\end{multline*}
Our aim here is to understand the geometry of the $28$ bitangents to the quartic $C_{48}$, and in order to do so we will use a geometric description provided by Wall in \cite{w}. First of all, we have the following important description of singular points of the curve dual to the smooth plane quartic.
\begin{proposition}[{\cite[Proposition in Section 1]{w}}]
Let $C \subset \mathbb{P}^{2}_{\mathbb{C}}$ be an irreducible quartic curve and let $C^{\vee}$ be the dual curve to $C$. Then $C^{\vee}$ has the same list of singularities as $C$, except as follows. The nodes, ordinary cusps and triple points of $C$ do not contribute to singularities of $C^{\vee}$. Conversely, $C^{\vee}$ may have nodes, ordinary cusps and singularities of type $E_{6}$ not arising from singularities of $C$.
\end{proposition}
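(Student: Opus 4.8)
The plan is to prove the statement by a local analysis of projective duality, carried out branch by branch, combined with the degree constraint $\deg C = 4$ and the biduality theorem $(C^\vee)^\vee = C$. Recall that the Gauss map sends a smooth point of $C$ to its tangent line, viewed as a point of the dual plane; if $\gamma(t) = (x(t):y(t):z(t))$ is a local parametrization of a branch of $C$, then the corresponding branch of $C^\vee$ is parametrized by the tangent lines $t \mapsto [\gamma(t) \wedge \gamma'(t)]$, or in affine coordinates by $t \mapsto (\mathrm{slope}(t), \mathrm{intercept}(t))$. The singularity type of a branch of $C^\vee$ is then read off from the orders of vanishing in this parametrization, and singular points of $C^\vee$ arise in exactly two ways: (a) locally, from the branch structure of $C$ at one of its points, and (b) globally, when several distinct points of $C$ share a common tangent line.

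First I would build the local dictionary by dualizing each branch type that can occur on an irreducible quartic. Writing a smooth branch whose tangent has contact order $k$ as $(t, t^k)$ (the tangent being $y=0$), the tangent line at parameter $t$ has slope $k t^{k-1}$ and intercept $-(k-1)t^k$, so the dual branch is $(k t^{k-1}, -(k-1)t^k) \sim (t^{k-1}, t^k)$. For $k=2$ (an ordinary smooth point) this is smooth; for $k=3$ (an ordinary flex) it is $(t^2,t^3)$, an ordinary cusp $A_2$; and for $k=4$ (a hyperflex) it is $(t^3,t^4)$, a singularity of type $E_6$. Dualizing a node (two smooth branches with distinct tangents) gives two distinct smooth points of $C^\vee$; dualizing an ordinary cusp $(t^2,t^3)$ gives $((3/2)t, -(1/2)t^3) \sim (t,t^3)$, a smooth point; and dualizing an ordinary triple point (three branches, distinct tangents) gives three distinct smooth points. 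This accounts for the listed exceptions: nodes, ordinary cusps and triple points of $C$ produce no singularity on $C^\vee$. For the remaining singularity types one checks, by the same local computation, that the dual branch carries the same analytic type --- for instance a tacnode $A_3$, given by $(t,t^2)$ and $(t,-t^2)$, dualizes to the two mutually tangent branches $q = \mp p^2/4$, again an $A_3$ --- which is the content of ``$C^\vee$ has the same list of singularities as $C$.''

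Next I would account for the new singularities of $C^\vee$ and explain why they are precisely nodes, ordinary cusps and $E_6$. The global coincidences of type (b) come from lines tangent to $C$ at several points: a bitangent yields two smooth branches of $C^\vee$ meeting at the point representing that line, with distinct tangent directions because the two tangency points differ, hence a node. Here the constraint $\deg C = 4$ is decisive: a line meets $C$ in four points counted with multiplicity, so a tangent at a smooth point has contact order at most $4$, forcing every flex to have contact exactly $3$ or $4$ (giving a cusp or an $E_6$ by the local dictionary), while a multitangent line can be tangent at most at two points with contact $2+2=4$ (giving a node, since a tritangent would require contact $\ge 6$ and is impossible). This is exactly why the only singularities $C^\vee$ can acquire beyond those inherited from $C$ are nodes, ordinary cusps and $E_6$.

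The main obstacle I anticipate is twofold. First, one must make the local branch computations uniform and rigorous across \emph{all} singularity types that an irreducible quartic may carry, rather than the sample cases above, and verify case by case that the non-exceptional types dualize to the same analytic type; this is where careful bookkeeping of the Puiseux data (multiplicity and contact/class of each branch) is needed, with biduality $(C^\vee)^\vee = C$ serving as a consistency check on every computation. Second, one must correctly match up the branches of $C^\vee$ lying over a common tangent line, distinguishing genuine local contributions from global coincidences. Since this is precisely the delicate point treated in Wall's original argument \cite{w}, one may either reproduce that local classification in full or invoke it directly.
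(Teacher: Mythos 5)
Your proposal is correct and follows essentially the same route as the source: the paper itself gives no proof of this proposition, quoting it directly from Wall \cite{w}, and your local dictionary for the Gauss map (dualizing a contact-$k$ branch $(t,t^k)$ to $(t^{k-1},t^k)$, so flexes give $A_2$, hyperflexes give $E_6$, bitangents give nodes) together with the B\'ezout bound that a line meets a quartic with total multiplicity $4$ is exactly the argument of Wall's Section~1. Your explicit computations (cusp dualizing to a smooth flex, tacnode to tacnode, impossibility of tritangents and of contact beyond $4$) check out, and your honest deferral of the exhaustive branch-by-branch verification to \cite{w} is consistent with how the paper itself treats the statement.
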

From this perspective, one can ask what we can learn about the geometry of smooth plane quartics by looking at the singularities of the dual curve of degree $12$. We focus here on the situation when the singularities of the dual curves are $E_{6}$ singularities. For such a description, let $\Gamma$ be a curve germ at $P$ with a unique tangent line $P^{\vee}$, $\Gamma^{\vee}$ the dual germ at $P^{\vee}$ with tangent $P$, and denote by $i(\Gamma)$ the local intersection number of $\Gamma$ and $P^{\vee}$.
\begin{proposition}[\cite{w}]
For a smooth plane quartic curve $C \subset \mathbb{P}^{2}_{\mathbb{C}}$, an $E_{6}$ singularity of $C^{\vee}$ dualizes to a smooth germ with a hyperflex tangent, i.e., the local intersection number $i(\Gamma)=4$, and vice versa.
\end{proposition}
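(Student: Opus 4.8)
The plan is to reduce the statement to a local computation with the Gauss map (the duality map sending a point of a curve to its tangent line) and then to identify the resulting germ through its Puiseux parametrization. Since $C$ is smooth, the germ $\Gamma$ of $C$ at $P$ is smooth, so I would choose affine coordinates in which $P$ is the origin and the tangent line $P^{\vee}$ is $\{y=0\}$, and write the branch as a graph $y=\sum_{j\ge 2}c_j x^j$. The contact number $i(\Gamma)$ is then the order of vanishing of the right-hand side, so a hyperflex is exactly the case $c_2=c_3=0$, $c_4\neq 0$; I would parametrize the branch by $x=t$, $y=c_4 t^4+O(t^5)$. Note that at a hyperflex the tangent meets $C$ only at $P$ (to order four), so $P^{\vee}$ carries a single branch of $C^{\vee}$, consistent with $E_6$ being unibranch.

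Next I would compute the dual germ $\Gamma^{\vee}$. Writing the tangent line at the point with parameter $t$ in the affine dual form $Y=p(t)X+q(t)$, one has $p(t)=y'(t)$ and $q(t)=y(t)-t\,y'(t)$, and $(p,q)$ are affine coordinates on the dual plane centered at $P^{\vee}$. A direct substitution gives $p(t)=4c_4 t^3+O(t^4)$ and $q(t)=-3c_4 t^4+O(t^5)$, so $\Gamma^{\vee}$ is parametrized by a pair of power series of orders $3$ and $4$ with nonzero leading coefficients.

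The decisive step is then to read off the singularity type from this parametrization. After a Newton--Puiseux reparametrization making $p=s^3$, the second coordinate becomes $q=c's^4+O(s^5)$ with $c'\neq 0$, so $\Gamma^{\vee}$ is a unibranch germ with a single characteristic Puiseux pair $(3,4)$; since $\gcd(3,4)=1$, this is precisely the equisingularity class of $u^3+v^4=0$, i.e. an $E_6$ singularity. The higher-order terms are irrelevant because $E_6$ is a simple, quasi-homogeneous (finitely determined) singularity with no moduli. This establishes the forward implication.

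For the converse I would invoke biduality, $(C^{\vee})^{\vee}=C$, which identifies the dual of $\Gamma^{\vee}$ with $\Gamma$ and, as recorded before the statement, sends the tangent of $\Gamma^{\vee}$ at $P^{\vee}$ back to $P$. Concretely I would run the same Gauss-map computation on the model germ $(p,q)=(t^3,t^4)$ of type $E_6$: its family of tangent lines, written in affine dual coordinates, has orders $1$ and $4$ in the parameter, hence defines a smooth germ whose tangent has contact exactly $4$, that is, a hyperflex on $C$. The main obstacle I anticipate is the bookkeeping required to pass cleanly between the affine-chart computation and the projective bidual picture, together with the justification that the higher-order terms in $p(t)$ and $q(t)$ do not perturb the singularity type; this is where the invariance of the Puiseux pair under reparametrization, and the rigidity of simple singularities, do the real work.
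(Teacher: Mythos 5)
The paper itself gives no proof of this proposition: it is quoted directly from Wall \cite{w}, so there is no argument in the text to compare against. Your write-up in effect reconstructs the classical local proof, and it is essentially correct. The forward direction is exactly the standard computation: for a smooth branch $y=c_4x^4+O(x^5)$ the Gauss map gives $p(t)=4c_4t^3+O(t^4)$, $q(t)=-3c_4t^4+O(t^5)$, and a unibranch germ with characteristic Puiseux pair $(3,4)$ is indeed analytically (not merely topologically) equivalent to $u^3+v^4=0$ --- your appeal to the rigidity of simple singularities is the right justification, this being Zariski's result that the branch with semigroup $\langle 3,4\rangle$ has no moduli. You are also right to note that Bézout forces the hyperflex tangent to meet the quartic only at $P$, so the $E_6$ germ is the whole germ of $C^{\vee}$ there.

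The one step you should tighten is the converse. Projective duality is not invariant under arbitrary analytic coordinate changes, so you cannot literally ``run the Gauss-map computation on the model germ $(t^3,t^4)$'': the normalization of a general $E_6$ germ to that model is analytic but not linear, and could a priori change the dual. The fix is cheap, and you half-anticipate it: in \emph{linear} affine coordinates adapted to the tangent line of the $E_6$ branch, its Puiseux parametrization is $(t^3,\,b_4t^4+\cdots)$ with $b_4\neq 0$ --- the exponent $4$ is forced because the semigroup $\langle 3,4\rangle$ is an analytic invariant --- and your computation then yields orders $1$ and $4$, hence a smooth bidual germ with contact exactly $4$. Alternatively, one can bypass biduality altogether: since $C$ is a smooth quartic, $i(\Gamma)\in\{2,3,4\}$ at every point, and the same Gauss-map computation shows these cases dualize respectively to a smooth germ, an ordinary ($A_2$) cusp, and an $E_6$ germ; as the three outcomes are distinct and the branches of $C^{\vee}$ correspond bijectively to germs of $C$, an $E_6$ point of $C^{\vee}$ can only come from $i(\Gamma)=4$. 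Either patch makes your argument complete, and it then agrees with the local analysis underlying Wall's classification.
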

In other words, if the dual curve $C^{\vee}$ of a smooth plane quartic $C$ has exactly $k$ singularities of type $E_{6}$, then $C$ admits exactly $k$ hyperosculating lines, i.e. lines tangent to $C$ with $i(\Gamma)=4$. Furthermore, such a tangent point, considered as a singularity created by $C$ and the associated hyperosculating line, is an $A_{7}$ singularity.

Let us come back to $C_{48}$. The dual curve to $C_{48}$, which will be denoted by $C_{48}^{\vee}$, is given by the following polynomial
\begin{multline*}
G(x,y,z)=x^{8}y^{4} + 2x^{4}y^{8} + y^{12} + \frac{256}{27}x^{9}z^{3} + 16x^{5}y^{4}z^{3} + \\ 16xy^{8}z^{3} - 
   \frac{256}{9}x^{6}z^{6} +  64x^{2}y^{4}z^{6} + \frac{256}{9}x^{3}z^{9} - \frac{256}{27}z^{12}.  
\end{multline*}
As we can check directly, using \verb}SINGULAR}, curve $C_{48}^{\vee}$ has exactly $16$ ordinary cusps, $24$ nodes, and $4$ singularities of type $E_{6}$. In the light of the above propositions, our curve $C_{48}$ admits exactly $4$ hyperosculating lines. It is worth recalling here that hyperosculating lines are considered as (degenerate) bitangent lines, so altogether we have $24$ (classical) bitangent lines and $4$ hyperosculating lines to $C_{48}$. 

Here we present equations of the $28$ bitangents to $C_{48}$, namely
{\footnotesize
\begin{longtable}{lp{12cm}}
$\ell_1:$ & $468x+(16r^{13}+78r^9+546r^5-990r)y+(3r^{12}-639)z =0$  \\ 
$\ell_2:$ & $468x-(16r^{13}+78r^9+546r^5-990r)y+(3r^{12}-639)z=0$  \\
$\ell_3:$ & $468x+(50r^{13}+312r^9+2028r^5-198r)y+(3r^{12}-639)z=0$  \\
$\ell_4:$ & $468x-(50r^{13}+312r^9+2028r^5-198r)y+(3r^{12}-639)z=0$  \\
$\ell_5:$ & $468x+(16r^{13}+78r^9+546r^5-990r)y+(30r^{12}+195r^8+1287r^4+45)z=0$  \\ 
$\ell_6:$ & $468x+(16r^{13}+78r^9+546r^5-990r)y+(-33r^{12}-195r^8-1287r^4+594)z=0$  \\ 
$\ell_7:$ & $468x+(50r^{13}+312r^9+2028r^5-198r)y+(30r^{12}+195r^8+1287r^4+45)z=0$  \\ 
$\ell_8:$ & $468x+(-16r^{13}-78r^9-546r^5+990r)y+(-33r^{12}-195r^8-1287r^4+594)=0z$  \\
$\ell_9:$ & $468x+(-16r^{13}-78r^9-546r^5+990r)y+(30r^{12}+195r^8+1287r^4+45)z=0$  \\ 
$\ell_{10}:$ & $468x+(50r^{13}+312r^9+2028r^5-198r)y+(-33r^{12}-195r^8-1287r^4+594)z=0$  \\ 
$\ell_{11}:$ & $468x+(-50r^{13}-312r^9-2028r^5+198r)y+(-33r^{12}-195r^8-1287r^4+594)z=0$  \\ 
$\ell_{12}:$ & $468x+(-50r^{13}-312r^9-2028r^5+198r)y+(30r^{12}+195r^8+1287r^4+45)z=0$  \\ 
$\ell_{13}:$ & $468x+(10r^{13}+78r^9+546r^5+288r)y+(-3r^{12}+171)z=0$  \\  
$\ell_{14}:$ & $468x+(-10r^{13}-78r^9-546r^5-288r)y+(-3r^{12}+171)z=0$  \\ 
$\ell_{15}:$ & $468x+(-10r^{13}-78r^9-546r^5-288r)y+(9r^{12}+39r^8+351r^4-162)z=0$  \\ 
$\ell_{16}:$ & $468x+(-10r^{13}-78r^9-546r^5-288r)y+(-6r^{12}-39r^8-351r^4-9)z=0$  \\ 
$\ell_{17}:$ & $468x+(10r^{13}+78r^9+546r^5+288r)y+(9r^{12}+39r^8+351r^4-162)z=0$  \\ 
$\ell_{18}:$ & $468x+(10r^{13}+78r^9+546r^5+288r)y+(-6r^{12}-39r^8-351r^4-9)z=0$  \\ 
$\ell_{19}:$ & $156x+(-8r^{13}-52r^9-312r^5+144r)y+(-r^{12}+57)z=0$  \\ 
$\ell_{20}:$ & $156x+(8r^{13}+52r^9+312r^5-144r)y+(-r^{12}+57)z=0$ \\ 
$\ell_{21}:$ & $156x+(-8r^{13}-52r^9-312r^5+144r)y+(-2r^{12}-13r^8-117r^4-3)z=0$  \\ 
$\ell_{22}:$ & $156x+(-8r^{13}-52r^9-312r^5+144r)y+(3r^{12}+13r^8+117r^4-54)z=0$  \\ 
$\ell_{23}:$ & $156x+(8r^{13}+52r^9+312r^5-144r)y+(-2r^{12}-13r^8-117r^4-3)z=0$  \\ 
$\ell_{24}:$ & $156x+(8r^{13}+52r^9+312r^5-144r)y+(3r^{12}+13r^8+117r^4-54)z=0$ \\ 
$\ell_{25}:$ & $39x+(2r^{12}+13r^8+78r^4-36)z=0$  \\ 
$\ell_{26}:$ & $39x-(2r^{12}+13r^8+78r^4+3)z=0$  \\
$\ell_{27}:$ & $x=0$ \\
$\ell_{28}:$ & $x+z=0$ 
\end{longtable}
}
where $r^{16} + 6r^{12} + 39r^8 - 18r^4 + 9=0$.  Let us notice that the lines $\ell_{25}, ..., \ell_{28}$ are hyperosculating.

In order to formulate our result devoted to weak-combinatorics of the $28$ bitangent lines we need the following notation. For a given line arrangement $\mathcal{L}$, let us denote by $n_{i} = n_{i}(\mathcal{L})$ the number of $i$-fold intersection points among the lines in $\mathcal{L}$.
\begin{proposition}
The arrangement $\mathcal{L} \subset \mathbb{P}^{2}_{\mathbb{C}}$ consisting of the $28$ bitangents to $C_{48}$ has such intersection points, that:
$$n_{2} = 240, \quad n_{3}=32, \quad n_{4}=7.$$
\end{proposition}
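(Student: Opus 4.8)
The plan is to reduce the statement to an explicit finite computation over the number field $K = \mathbb{Q}(r)$, where $r$ is a root of the degree-$16$ polynomial $p(t) = t^{16} + 6t^{12} + 39t^8 - 18t^4 + 9$ governing the bitangents. First I would record each of the $28$ lines $\ell_i$ as its coefficient vector $v_i = (\alpha_i,\beta_i,\gamma_i) \in K^3$, reading the entries directly off the list above. Since every coefficient is a $\mathbb{Q}$-polynomial in $r$, all subsequent arithmetic can be performed exactly inside $\mathbb{Q}[r]/(p)$, with equality of field elements tested by reduction modulo $p$ (after first checking that $p$ is the relevant minimal polynomial, or otherwise working in the quotient ring with due care about zero divisors). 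This is precisely the kind of exact symbolic computation that \texttt{SINGULAR} handles reliably.

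The core step is to compute, for each of the $\binom{28}{2} = 378$ unordered pairs $\{i,j\}$, the intersection point $P_{ij} = \ell_i \cap \ell_j$ as the cross product $v_i \times v_j \in \mathbb{P}^2(K)$, and to normalize each $P_{ij}$ to a canonical homogeneous representative, for instance by dividing through by its first nonzero coordinate. I would then cluster the $378$ normalized points into classes of mutually equal points. A point of $\mathbb{P}^2$ through which exactly $k$ of the lines pass contributes precisely $\binom{k}{2}$ of the pairs $\{i,j\}$, all of which yield the same $P_{ij}$; hence a $k$-fold point appears as a cluster of size $\binom{k}{2}$. Reading off the cluster sizes then returns the $n_i$: clusters of size $\binom{2}{2}=1$ are the double points, clusters of size $\binom{3}{2}=3$ the triple points, and clusters of size $\binom{4}{2}=6$ the quadruple points. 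The claim is that there are $240$, $32$, and $7$ of these respectively, and that no cluster has any other size, so that in particular $C_{48}$'s bitangent arrangement has no point of multiplicity $\geq 5$. As an independent sanity check I would verify the combinatorial identity
$$\sum_{i \geq 2} \binom{i}{2}\, n_i = \binom{28}{2},$$
which for the asserted values reads $240\cdot 1 + 32\cdot 3 + 7\cdot 6 = 240 + 96 + 42 = 378 = \binom{28}{2}$, confirming that every pair of lines is accounted for exactly once and that no multiple point has been overlooked.

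The main obstacle is the reliable \emph{exact} detection of coincidences among the $P_{ij}$ over $K$: because the coefficients are high-degree polynomials in $r$, two points that look different as unreduced expressions may coincide after reduction modulo $p$, while spurious coincidences must simultaneously be excluded. Everything therefore hinges on carrying out the normalization and the equality tests as genuine reductions in $\mathbb{Q}[r]/(p)$ rather than numerically; equivalently, one can detect concurrency of a triple $\ell_a,\ell_b,\ell_c$ by testing whether the $3\times 3$ determinant of their coefficient vectors vanishes in $K$, then grouping the concurrent triples that share a common point. One can lighten the bookkeeping by exploiting that $\mathrm{Aut}(C_{48})$, of order $48$, permutes the $28$ bitangents (preserving in particular the distinguished set of four hyperosculating lines $\ell_{25},\dots,\ell_{28}$) and hence acts on the set of multiple points, so that the double, triple and quadruple points fall into a small number of group orbits. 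Checking one representative per orbit and multiplying by orbit sizes both reduces the computational workload and furnishes a structural confirmation of the final counts.
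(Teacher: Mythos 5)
Your proposal is correct in substance, but it follows a genuinely different route from the paper. The paper never touches the $378$ pairwise intersection points individually: it works with the defining polynomial $f$ of the arrangement and extracts the counts from two global degree computations in \verb}SINGULAR}. Specifically, it uses ${\rm deg}(J_{f}) = \tau(\mathcal{L}) = n_{2}+4n_{3}+9n_{4} = 431$ (each ordinary $k$-fold point of a line arrangement has Tjurina number $(k-1)^{2}$), combines this with the pair count $\binom{28}{2} = n_{2}+3n_{3}+6n_{4}$ to get $n_{3}+3n_{4}=53$, and then computes the degree of the ideal $T_{f}$ of third-order partials, ${\rm deg}(T_{f}) = 7$, which isolates $n_{4}=7$ and hence $n_{3}=32$, $n_{2}=240$. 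Your method instead enumerates all intersection points over $K=\mathbb{Q}(r)$, clusters coincident points, and reads off $n_{k}$ from cluster sizes $\binom{k}{2}$. Each approach buys something: the paper's argument is far lighter computationally (two scalar degrees plus linear algebra) but tacitly presupposes that the arrangement has no point of multiplicity $\geq 5$ and that ${\rm deg}(T_{f})$ counts quadruple points with the right weight, whereas your direct enumeration establishes the complete incidence structure and \emph{proves} the absence of higher-multiplicity points as a byproduct, at the cost of exact arithmetic with degree-$16$ algebraic numbers. Your caveats are well placed: equality testing must happen modulo the actual minimal polynomial of $r$ (so one should first factor $p(t)=t^{16}+6t^{12}+39t^{8}-18t^{4}+9$ over $\mathbb{Q}$ and work modulo the correct factor, since division during normalization fails in a quotient ring with zero divisors), and your identity $240\cdot 1+32\cdot 3+7\cdot 6 = 378$ is exactly the constraint the paper also uses, there as an input equation rather than as a check. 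The suggested use of the ${\rm Aut}(C_{48})$-action to organize the points into orbits is a nice structural addition that the paper does not exploit.
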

\begin{proof}
This is a standard check that can be done using \verb}SINGULAR}. Let us denote by $f$ be the defining equation of the arrangement $\mathcal{L}$, and denote by $J_{f} = \langle \frac{\partial f}{\partial_{x}}, \frac{\partial f}{\partial_{y}}, \frac{\partial f}{\partial_{z}} \rangle$ the Jacobian ideal. Recall that
$${\rm deg}(J_{f}) = \tau(\mathcal{L}) = n_{2} + 4n_{3} + 9n_{4},$$
where $\tau(\mathcal{L})$ denotes the total Tjurina number of $\mathcal{L}$ (see the precise definition below).
Using \verb}SINGULAR}, we can check that ${\rm deg}(J_{f}) = 431$. Since 
\begin{equation}
\label{naive}
    \binom{28}{2} = n_{2} + 3n_{3} + 6n_{4}
\end{equation}
we obtain 
$$53 = n_{3} + 3n_{4}.$$
Define by $T_{f}$ the ideal generated by all the partial derivatives of order three. Using \verb}SINGULAR} we can check that ${\rm deg}(T_{f}) = 7$, which gives us that
$$n_{3} = 53 - 3n_{4} = 32.$$
Finally, using \eqref{naive} we get $n_{2}=240$, and this completes the proof.
\end{proof}
Now we would like to detect some free arrangements that are determined by the quartic $C_{48}$ and its bitangents.  We need to recall some fundamental definitions.

Let $S := \mathbb{C}[x,y,z]$ be the graded ring of polynomials with complex coefficients, and for a homogeneous polynomial $f \in S$ let $J_{f}$ be the Jacobian ideal given by $f$.

\begin{definition}
Consider the graded $S$-module of Jacobian syzygies of $f$, namely $$AR(f)=\{(a,b,c)\in S^3 : af_x+bf_y+cf_z=0\}.$$
The minimal degree of non-trivial Jacobian relations for $f$ is defined as
$${\rm mdr}(f):=\min\{r : AR(f)_r\neq (0)\}.$$ 
\end{definition}
Recall that for a reduced plane curve $C : f=0$ we denote by $\tau(C)$ its total Tjurina number, i.e., $$\tau(C) = \sum_{p \in {\rm Sing}(C)} \tau_{p},$$
the sum goes over all singular points of $C$ and $\tau_{p}$ denotes the local Tjurina number. Now we can define free plane curves using a result due to du Plessis and Wall \cite{duP}.
\begin{definition}
\label{d2}
Let $C : f=0$ be a reduced curve in $\mathbb{P}^{2}_{\mathbb{C}}$ of degree $d$. Then the curve $C$ with $r:={\rm mdr}(f)\leq (d-1)/2$ is free if and only if
\begin{equation}
\label{duPles}
(d-1)^{2} - r(d-r-1) = \tau(C).
\end{equation}
\end{definition}
\begin{definition}
If $C : f=0$ is a reduced plane curve of degree $d$ in $\mathbb{P}^{2}_{\mathbb{C}}$, then the exponents of $C$ is the pair defined as
$${\rm exp}(C) = ({\rm mdr}(f), d - 1 - {\rm mdr}(f)).$$
\end{definition}
Observe that our arrangements $\mathcal{EL}$ consisting of the quartic curve $C_{48}$ and its bitangents admit only $n_{2}$ nodes, $n_{3}$ ordinary triple points, $n_{4}$ ordinary quadruple points, $t_{3}$ tacnodes, and $t_{7}$ singularities of type $A_{7}$, so
$$\tau(\mathcal{EL}) = n_{2} + 3t_{3} + 4n_{3} + 7t_{7} + 9n_{4}.$$

\begin{theorem}
The quartic curve $C_{48}$ and its bitangents admits exactly one free arrangement of degree $8$, namely the arrangement admits one ordinary quadruple point and four singularities of type $A_{7}$.
\end{theorem}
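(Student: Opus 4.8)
The plan is to prove the theorem in two movements: first to exhibit the free arrangement explicitly, then to show it is the only one. Since any such arrangement $\mathcal{EL}$ has degree $8$ (the quartic of degree $4$ together with four bitangents of total degree $4$), the du~Plessis--Wall criterion \eqref{duPles} applies with $d=8$, so $\mathcal{EL}$ is free precisely when $r:={\rm mdr}(f)\le 3$ and $49-r(7-r)=\tau(\mathcal{EL})$. As $r$ runs through $\{0,1,2,3\}$ the right-hand side takes the values $\{49,43,39,37\}$, so \emph{any} free arrangement of degree $8$ must satisfy $\tau(\mathcal{EL})\in\{37,39,43,49\}$; in particular $\tau(\mathcal{EL})\ge 37$.

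The natural candidate is the arrangement $\mathcal{EL}_0$ consisting of $C_{48}$ and its four hyperosculating lines $\ell_{25},\ell_{26},\ell_{27},\ell_{28}$. First I would record the two geometric facts that drive the count. A direct substitution shows these four lines pass through $P=[0:1:0]$ (immediate for $\ell_{27}:x=0$ and $\ell_{28}:x+z=0$, and for $\ell_{25},\ell_{26}$ by setting $x=z=0$), while $F(0,1,0)=1\ne 0$ shows $P\notin C_{48}$. The key structural lemma is that \emph{any intersection point of two distinct bitangents lies off $C_{48}$}: if two bitangents met at $Q\in C_{48}$, then $Q$ would lie in $\ell\cap C_{48}$ for each line, and since a bitangent meets the smooth quartic in four points all absorbed by tangency, $Q$ is a point of tangency, forcing each line to equal the unique tangent $T_Q C_{48}$ and hence to coincide. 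Thus the tangency locus and the line--line locus are disjoint, so the singularities of $\mathcal{EL}_0$ are exactly the four $A_7$ points from the hyperosculating tangencies (each with $\tau_p=7$) and one ordinary quadruple point at $P$ (with $\tau_p=9$), giving $\tau(\mathcal{EL}_0)=4\cdot 7+9=37$. A SINGULAR computation of ${\rm mdr}(f_0)=3$ then yields $49-3\cdot 4=37=\tau(\mathcal{EL}_0)$, so \eqref{duPles} certifies that $\mathcal{EL}_0$ is free with exponents $(3,4)$.

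For uniqueness I would run the same bookkeeping over an arbitrary choice of four bitangents, writing $k$ for the number of hyperosculating lines among them. By the lemma no tangency point coincides with a line--line intersection, and distinct bitangents never share a tangent point, so the tangency contribution is exactly $7k+6(4-k)=24+k$ (each classical bitangent producing two tacnodes, each of Tjurina number $3$), while the four lines contribute a line--line Tjurina number $L\le 9$, with $L=9$ only when they are concurrent. Hence
$$\tau(\mathcal{EL})=(24+k)+L\le 33+k\le 37,$$
with equality forcing $k=4$ and $L=9$. Since freeness requires $\tau(\mathcal{EL})\ge 37$, a free arrangement must have $\tau=37$, hence $k=4$; as there are exactly four hyperosculating lines, these are $\ell_{25},\dots,\ell_{28}$, which we already saw are concurrent at $P$. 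Therefore $\mathcal{EL}_0$ is the unique free arrangement, and it carries one ordinary quadruple point and four $A_7$ singularities, as claimed.

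The main obstacle is not the combinatorial bound, which the ``intersections lie off the curve'' lemma renders transparent, but the verification that ${\rm mdr}(f_0)=3$ rather than ${\rm mdr}(f_0)\le 2$: the value $\tau=37$ alone is also consistent with ${\rm mdr}(f_0)=2$, for which \eqref{duPles} would read $39\ne 37$ and the arrangement would be merely nearly free. The freeness conclusion therefore rests genuinely on the syzygy computation confirming that $\mathcal{EL}_0$ admits no Jacobian relation of degree $\le 2$.
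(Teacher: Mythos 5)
Your proof is correct, and it takes a genuinely different route to uniqueness than the paper does. The paper works combinatorially: it combines the B\'ezout count $n_{2}+2t_{3}+3n_{3}+4t_{7}+6n_{4}=22$ with the Dimca--Sernesi bound ${\rm mdr}(f)\geq d/2-2=2$ to restrict $\tau\in\{37,39\}$, enumerates all non-negative integer solutions of the resulting Diophantine system (three weak combinatorics for $\tau=37$, none for $\tau=39$), and then eliminates two of the three by ad hoc geometric arguments. You instead prove a structural lemma --- intersection points of distinct bitangents lie off the smooth quartic, and distinct bitangents cannot share a tangency point, both consequences of B\'ezout exhaustion plus uniqueness of the tangent line at a smooth point --- which lets you compute $\tau(\mathcal{EL})=(24+k)+L$ \emph{exactly}, with $k$ the number of hyperosculating lines and $L\leq 9$ the line--line contribution (equality only for concurrent lines). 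Matching the ceiling $\tau\leq 33+k\leq 37$ against the floor $\tau\geq 37$ forced by du~Plessis--Wall on any free octic immediately pins down $k=4$, concurrency, and the combinatorics $(n_4,t_7)=(1,4)$, with no enumeration, no Dimca--Sernesi input, and no case elimination; this is arguably tighter than the paper's treatment, where the exclusion of $(0,0,0,3,4)$ and $(2,0,0,0,5)$ is only sketched. Both arguments share the same irreducible computational core: the verification ${\rm mdr}(f_0)=3$ by SINGULAR (the paper's computation), without which $\tau=37$ alone would be consistent with $r=2$ and non-freeness --- you rightly flag this, though your aside that $r=2$, $\tau=37$ would make the curve ``nearly free'' is a slip in terminology (the defect there is $39-37=2$, so the curve would be neither free nor nearly free); this does not affect the argument. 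Your explicit check that $\ell_{25},\dots,\ell_{28}$ pass through $[0:1:0]\notin C_{48}$ also makes concrete the concurrency that the paper only asserts as a direct check.
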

\begin{proof}
Assume that $\mathcal{EL}$ given by $f=0$ is a free arrangement consisting of the quartic $C_{48}$ and $4$ bitangents. We want to find some numerical constraints on such free arrangements. First of all, recall that the arrangement $\mathcal{EL}$ has to satisfy the following system of Diophantine equations:
\begin{equation*}(\square) : \quad 
\systeme{n_{2} + 2t_{3} + 3n_{3} + 4t_{7} + 6n_{4} = 4\cdot 4 + \binom{4}{2} = 22, n_{2}+3t_{3} + 4n_{3} + 7t_{7}+9n_{4} = \tau(\mathcal{EL})},
\end{equation*}
wherein $\tau(\mathcal{EL}) \in \{37,39\}$. Indeed, the first equation follows from B\'ezout's Theorem, and for the second equation we need to observe that by \cite[Theorem 2.1]{DimcaSernesi} and the fact that ${\rm mdr}(f) \leq (d-1)/2$ one has
$$\frac{7}{2} \geq {\rm mdr}(f) \geq \frac{1}{2}\cdot 8-2 = 2,$$
which implies that ${\rm mdr}(f) \in \{2,3\}$. Using \eqref{duPles}, we get $\tau(\mathcal{EL}) \in \{37,39\}$. It turns out that for $\tau(\mathcal{EL})=39$ our system $(\square)$ does not have any non-negative integer solution, and for $\tau(\mathcal{EL})=37$ we have exactly three solutions, namely
$$(n_{2},n_{3},n_{4},t_{3},t_{7}) \in \{(0,0,0,3,4),(2,0,0,0,5),(0,0,1,0,4)\}.$$
Observe that the first two weak-combinatorics cannot be realized geometrically using bitangents and quartic $C_{48}$, i.e., the first one cannot be realized due to B\'ezout's theorem (too many intersections), and the second combinatorics cannot be realized since having $4$ bitangents we can produce at most $4$ singularities of type $A_{7}$. The last combinatorics can be realized geometrically. Consider the following arrangement $\mathcal{EL}$ given by
$$Q(x,y,z) = (x^{4}+y^{4}+xz^{3})\cdot\ell_{25}\cdot\ell_{26}\cdot\ell_{27}\cdot\ell_{28}.$$
We can check directly that the lines $\ell_{25}, \ell_{26}, \ell_{27}, \ell_{28}$ are concurrent, which means that they intersect at a quadruple points. Moreover, each line $\ell_{i}$ is tangent to quartic $C_{48}$ at exactly one point, so the local intersection index is equal to $4$, and we have a singular point of type $A_{7}$. Summing up, we have $n_{4}=1$ and $t_{7}=4$. We can check using \verb}SINGULAR} that ${\rm mdr}(Q)=3$, and
$$37 = r^{2} -r(d-1)+(d-1)^{2} = \tau(\mathcal{EL}) = 7t_{7}+9n_{4} = 37,$$
hence $\mathcal{EL}$ is free with exponents $(3,4)$. Since in our arrangement of bitangents we have exactly $4$ hyperosculating lines, our arrangement is unique.
\end{proof}
\begin{remark}
In light of \cite[Theorem 1.1]{dim2}, if we delete one (and any) line from $\mathcal{EL}$, we get either a free or plus-one generated arrangement, and it turns out that we end up with the second scenario, namely we get a plus-one generated curve.
\end{remark}
\section*{Acknowledgement}
I would like to thank Xavier Roulleau for helpful comments and \verb}MAGMA} computations.

Marek Janasz is supported by the National Science Centre (Poland) Sonata Bis Grant  \textbf{2023/50/E/ST1/00025}. For the purpose of Open Access, the authors have applied a CC-BY public copyright licence to any Author Accepted Manuscript (AAM) version arising from this submission.
\section*{Appendix}
Here we present our \verb}SINGULAR} script that allows to verify the claim in Theorem \ref{symq}.

\begin{center}
\begin{BVerbatim} 

option(redSB);
LIB "primdec.lib";
 
ring R = 0, (A,B,C,D,E,F), dp;
ideal si = 
A*D*E,
B*D*F,
C*E*F,
12*A*B*E + 2*A*D*F - D2*E,
12*A*B*F + 2*B*D*E - D2*F,
12*B*C*E + 2*C*D*F - E*F2,
12*B*C*D + 2*B*E*F - D*F2,
12*A*C*D + 2*A*E*F - D*E2,
12*A*C*F + 2*C*D*E - E2*F,
1728*A*B*C - 144*A*F2 - 144*B*E2 - 144*C*D2 + 144*D*E*F - 1;
primdecGTZ(si);
\end{BVerbatim}
\end{center}

\vskip 0.5 cm
\bigskip
Marek Janasz,
Department of Mathematics,
University of the National Education Commission Krakow,
Podchor\c a\.zych 2,
PL-30-084 Krak\'ow, Poland. \\
\nopagebreak
\textit{E-mail address:} \texttt{marek.janasz@up.krakow.pl}

\end{document}